
\documentclass[letterpaper, 10 pt, conference]{ieeeconf}  
\pdfminorversion=4 

\IEEEoverridecommandlockouts                              

\overrideIEEEmargins                                      

\usepackage{graphicx}
\usepackage{textcomp}
\usepackage[varg,bigdelims]{newtxmath}

\usepackage{enumitem}
\usepackage{dsfont, url}
\usepackage{mathtools}
\usepackage[usenames, dvipsnames]{xcolor}
\usepackage[%
colorlinks={true},%
citecolor={Blue},%
urlcolor={PineGreen},%
linkcolor={Sepia}%
]{hyperref}
\usepackage{float}
\usepackage{newfloat}
\usepackage[skip=1pt]{caption}
\usepackage{amsmath} 
\usepackage{amssymb}  
\usepackage{algorithm}
\usepackage[noend]{algpseudocode}
\usepackage{todonotes}
\usepackage{pgf,tikz}
\usepackage{adjustbox}
\usepackage{verbatim} 
\usepackage{xargs}
\usepackage{cite}

\usetikzlibrary{plotmarks}
\usetikzlibrary{chains}
\usepackage{pgfplots}
\pgfplotsset{compat=newest}
\pgfplotsset{plot coordinates/math parser=false}
\newlength\figureheight
\newlength\figurewidth
\usetikzlibrary{shapes,arrows}
\usetikzlibrary{positioning, calc, fit}
\usetikzlibrary{decorations,decorations.pathmorphing,decorations.pathreplacing}
\usetikzlibrary{shapes.misc}
\usetikzlibrary{shadows,trees}

\DeclareMathOperator*{\sbjto}{subject\ to}

\DeclareMathOperator{\sat}{sat}
\DeclareMathOperator{\trace}{tr}

\renewcommand{\leq}{\leqslant}

\renewcommand{\geq}{\geqslant}

\newcommand{\R}{\mathds{R}}
\newcommand{\Nz}{\mathds{N}_0}
\newcommand{\N}{\mathds{Z}_{+}}

\newcommand{\bmat}[1]{\begin{bmatrix}#1\end{bmatrix}}

\newcommand{\norm}[1]{\left\|#1\right\|}
\newcommand{\secref}[1]{Section \ref{#1}}

\newcommand{\transp}{^\top}

\newcommand{\zeros}{\mathbf{0}}

\newcommand{\st}{x}

\newcommand{\control}{u}
\newcommand{\controlset}{\mathds{U}}

\newcommand{\costps}{c_{\mathrm{s}}}
\newcommand{\costfinal}{c_{\mathrm{f}}}

\newcommand{\authority}{u_{\max}}

\newcommand{\Let}{\coloneqq}

\newtheorem{assumption}{Assumption}
\newtheorem{theorem}{Theorem}
\newtheorem{lemma}{Lemma}
\newtheorem{proposition}{Proposition}

\newtheorem{remark}{Remark}

\newtheorem{example}{Example}

\newtheorem{pstatement}{\sc Problem Statement}

\allowdisplaybreaks

\title{\LARGE \bf
	Centralized model predictive control with distributed adaptation
}

\author{Prabhat K. Mishra, Tixian Wang, Mattia Gazzola, Girish Chowdhary 
	\thanks{We gratefully acknowledge financial support from ONR MURI N00014-19-1-2373 and joint CPS USDA grant 2018-67007-28379.}%
	\thanks{ P. K. Mishra, T. Wang and G. Chowdhary are with Coordinated Science Laboratory,
		University of Illinois at Urbana Champaign (UIUC), 
		USA.
		\tt\{pmishra,tixianw2, girishc\}@illinois.edu}%
	\thanks{ M. Gazzola is with the Department of Mechanical Science and Engineering, and National Center for Supercomputing Applications,
		University of Illinois at Urbana Champaign (UIUC), 
		USA.
		\tt\{mgazzola@illinois.edu}%
	\thanks{The first author is thankful to Utsav Sadana and Prof.\ Prashant Mehta for helpful discussions. }
}

\usepackage[normalem]{ulem}

\begin{document}

	\maketitle
	\thispagestyle{empty}
	\pagestyle{empty}

\begin{abstract}  
	A centralized model predictive controller (MPC), which is unaware of local uncertainties, for an affine discrete time nonlinear system is presented. The local uncertainties are assumed to be matched, bounded and structured. In order to encounter disturbances and to improve performance, an adaptive control mechanism is employed locally. The proposed approach ensures input-to-state stability of closed-loop states and convergence to the equilibrium point. Moreover, uncertainties are learnt in terms of the given feature basis by using adaptive control mechanism. In addition, hard constraints on state and control are satisfied.                      
\end{abstract}

\section{Introduction}
Multi-agent systems are often characterized by individual entities that have only partial access to sensory and environmental data, and that have limited communication or computational capabilities, rendering the enactment of a successful centralized, global goal-directed strategy challenging \cite{multiagent, multiagent_MPC}. This is a recurrent scenario shared by a number of systems and applications. For instance, in marine search operations, small unmanned submarines cannot access global positioning system (GPS) while underwater. And although they might be able to exchange information with a fully equipped surface vessel for centralized coordination, their typically limited bandwidth communication may impair operation effectiveness \cite{marine_search}. In agricultural robotics also, ground robots operate in crop fields where they receive noisy GPS data, and while they can communicate with a centralized computer station outside the field, they typically do so with limited and sometimes unreliable bandwidth \cite{agbots_Chowdhary, agriculture}.
\par Similar conditions may be encountered in biological systems as well. Cephalopods, and in particular octopuses, have recently received significant attention in this regard. Indeed, octopuses exhibit the remarkable ability to coordinate, through a relatively limited amount of computing power, virtually infinite degrees of freedom, across eight arms and throughout their compliant, distributed bodies, giving rise to highly complex behaviors \cite{octopus_inspired, octopus_mind}. Although the algorithmic nature of their embodied control system is unknown, it might be reasonable to think of it in terms of generic commands issued by the brain, that are then adapted by the individual arms' neural infrastructure based on local conditions. This is an agile strategy in which the brain does not need to know everything and does not have to detail every low level instruction. Instead, it can more simply `draft' commands that are locally refined by the arms (agents) themselves. As a consequence, no component of the system needs complete information, thus limiting the need for computing resources.
\par Inspired by this paradigm, in this article we present a framework in which a centralized controller responsible for goal-directed global coordination and constraint satisfaction is enriched with distributed adaptation mechanisms to deal with locally observed uncertainties.
Our proposed approach employs tube-based model predictive control (MPC) \cite{Mayne_tube_NLMPC} as a centralized controller to satisfy physical constraints and to achieve an overall objective along with local adaptive control mechanism \cite{haddad_adaptive} to deal with distributed uncertainties. The adaptive control mechanism allows the convergence of closed-loop states to an equilibrium point in addition to input-to-state stability (ISS) achieved due to MPC.
\par MPC techniques are well known for their capability to handle practical constraints at the synthesis stage while optimizing some suitably defined objective function. In many cases the predicted behavior of the system based on nominal model and the actual behavior are not identical due to the presence of uncertainties. Therefore, it is desirable to design a robust control strategy that can guarantee the constraint satisfaction for all disturbance realizations. There are broadly two approaches that deal with bounded uncertainties in the MPC framework: min-max MPC \cite{min-max_MPC} and tube-based MPC \cite{Mayne_tube_NLMPC}. The first approach is based on iteratively solving online an optimal control problem, which involves the minimization of some cost subject to satisfaction of constraints for all possible disturbance sequences. In contrast, tube-based MPC is based on iteratively solving online an optimal control problem for nominal dynamics with tightened constraints, and therefore has modest computational requirements. In this approach all trajectories of a given uncertain system lie in a bounded neighborhood (the tube) of the nominal trajectory (center of the tube), which in turn implies ISS of the closed-loop system. Satisfaction of constraints by uncertain systems is ensured by tightening the constraints for the nominal system \cite{Muller_constraint_tightening, Mesbah_constraint_tightening}.    
\par MPC can be employed along with some other suitable control technique \cite{Borrelli_adaptive_MPC, adaptiveMPC_nonlinear, MPC_ISM_matched}. Adaptive control schemes and MPC are employed for linear \cite{Borrelli_adaptive_MPC} and nonlinear \cite{adaptiveMPC_nonlinear} systems to deal with state-dependent uncertainties. For example, min-max MPC is employed to stabilize the system together with an adaptive estimator to predict the support set for uncertainties, resulting in uniform ultimate boundedness of the closed-loop states \cite{adaptiveMPC_nonlinear}. MPC is also used with the integral sliding mode (ISM) control \cite{MPC_ISM_matched}, where ISM deals with local matched uncertainties and MPC functions as a remote controller. This approach proves ISS of the closed-loop system.  
\par Although MPC can be applied to control a continuous process, MPC based controllers can be invoked only at discrete instants of time \cite{MPC_ISM_matched}. Further, it is argued \cite{haddad_adaptive} that the adaptive control theory developed for continuous time dynamical systems cannot be directly applied to discrete time ones. In particular, the weight update law needs to be modified for discrete time dynamical systems. By numerical experiments it is demonstrated that a native discrete adaptive controller outperforms the discretized version of a continuous adaptive controller \cite{discreteL1_adaptive}. To the best of our knowledge a framework in which a discrete adaptive controller interacts with MPC only at discrete instants of time is missing in literature. The planning-learning framework presented in \cite{planning-learning_Chowdhary} for cooperative multi-agent systems is also relevant to the context of the present article. In \cite{planning-learning_Chowdhary}, each agent learns different models and shares some relevant information with the team. Differently from \cite{planning-learning_Chowdhary}, here we consider non-interacting agents with known dynamics, MPC is employed for planning, and adaptive control for learning only the disturbance weights.

The contributions of this article are: \textit{(a)} a framework that deals with nominal system dynamics centrally and disturbances locally; \textit{(b)} the guarantee of convergence of states to an equilibrium point in addition to ISS while respecting the practical constraints. 
\par This article is structured as follows. We present the system setup and problem formulation in \secref{s:problem_setup}. We present important notions of adaptive control and tube-based MPC in \secref{s:adaptive} and \secref{s:MPC}, respectively. Our algorithm and its stability are presented in \secref{s:stability}. We validate our theoretical results by numerical experiments in \secref{s:experiment} and conclude in \secref{s:epilogue}. Our proofs are given in the appendix.

\begin{figure}
	\centering
	\begin{adjustbox}{width = 0.9\columnwidth}
		\begin{tikzpicture}
		\tikzstyle{blockgreen} = [draw, fill=green!10, rectangle, 
		minimum height=2em, minimum width=1cm] 
		\tikzstyle{blockred} = [draw, fill=red!10, rectangle, 
		minimum height=2em, minimum width=1cm] 			
		\node[coordinate] (0) at (0,0) {};
		\node [blockred, right= 2.5cm of 0] (centralized_MPC) {Centralized MPC};	
		\node [blockgreen, below= 2cm of 0] (left_agent) {First adaptive agent};
		\node [blockgreen, right= 4.5cm of left_agent] (right_agent) {Second adaptive agent};

		\draw[thick ,->] (left_agent.north) -- ($(left_agent.north) + (0,2)$) to node[auto, swap]{$\control_t^{a(1)}, \st_t^{(1)}$}  (centralized_MPC.west);
		\draw[thick ,->] (centralized_MPC.south) to node[auto]{$\control_t^{m(1)}$} (left_agent.east);
		\draw[thick ,->] (right_agent.north) -- ($(right_agent.north) + (0,2)$) to node[auto]{$\control_t^{a(2)}, \st_t^{(2)}$}  (centralized_MPC.east);
		\draw[thick ,->] (centralized_MPC.south) to node[auto, swap]{$\control_t^{m(2)}$} (right_agent.west);
		\end{tikzpicture}
	\end{adjustbox}	
	\caption{Two adaptive agents are shown along with a centralized MPC controller. The states of individual agents and the adaptive control components are transmitted to the centralized MPC controller. The MPC control components are transmitted to the individual adaptive agents.}		
	\label{Fig:Setup}
\end{figure}
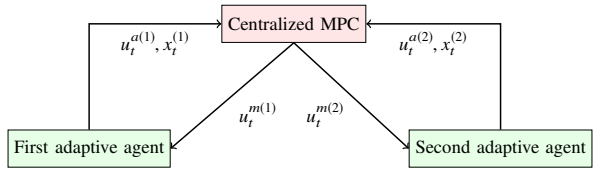  
\section{Problem setup}\label{s:problem_setup}
We let $\R$ denote the set of real numbers, $\Nz$ the set of non-negative integers and $\N$ the set of positive integers. For a given vector $v$ and positive semi-definite matrix $M \succeq \zeros$, $\norm{v}_M^2$ is used to denote $v \transp M v$. For a given matrix $A$, the trace, the largest eigenvalue, pseudo-inverse and Frobenius norm are denoted by $\trace(A)$, $\lambda_{\max}(A)$, $A^{\dagger}$ and $\norm{A}_F$, respectively. By notation $\norm{A}$ and $\norm{A}_{\infty}$, we mean the standard $2-$norm and $\infty-$norm, respectively, when $A$ is a vector, and induced $2-$norm and $\infty-$norm, respectively, when $A$ is a matrix. A vector or a matrix with all entries $0$ is represented by $\zeros$ and $I$ is an identity matrix of appropriate dimensions.
\par Let us consider the discrete time dynamical system 
\begin{equation}\label{e:system}
\st_{t+1} = f(\st_t) + g(\st_t)\left( \control_t + h(\st_t) \right),
\end{equation}
where
\begin{enumerate}[leftmargin = *, nosep, label=(\eqref{e:system}-\alph*), widest = b]
	\item \label{e:constraints} $\st_t \in \mathcal{X} \subset \R^d$, $ \control_t \in \controlset \Let \{v \in \R^m \mid \norm{v}_{\infty} \leq \authority \}$,
	\item $f:\R^d \rightarrow \R^d $, $g:\R^d \rightarrow \R^m$ are given Lipschitz continuous functions with Lipschitz constant $L_f$ and $L_g$, respectively, 
	\item $h(\st_t)$ is the state dependent matched structured uncertainty at time $t$ such that $g(\st_t)h(\st_t) \in \mathds{W} \Let \{v \in \R^d \mid \norm{v}_{\infty} \leq w_{\max} \}$ for every $\st_t \in \R^d$. We assume that a feature basis function $\phi: \R^p \rightarrow \R^q$ is given such that $h(\st_t) = W\phi(\st_t)$ where $W \in \R^{m\times q}$ is an unknown matrix.
	\item We further assume that there exist $\delta_g, \delta_{\phi} >0$ such that $\norm{g(x)} \leq \delta_g$ and $\norm{\phi(x)} \leq \delta_{\phi}$ for each $x \in \mathcal{X}$. 
	\item We have enough control authority, so that $\authority > 2 \norm{W}_F\delta_{\phi}$.	 
  \end{enumerate}  
  \begin{remark}
Let us consider a system of $M$ agents in which each agent is characterized by a different dynamics and different structure of matched uncertainties. For instance, the agent $i$ has the dynamics of the following form: 
\begin{equation}\label{e:system_overall}
\st_{t+1}^{(i)} = f^{(i)}(\st_t^{(i)}) + g^{(i)}(\st_t^{(i)})\left( \control_t^{(i)} + h^{(i)}(\st_t^{(i)}) \right),
\end{equation} 
where $f^{(i)}, g^{(i)}$ and $h^{(i)}$ can be defined as in \eqref{e:system} for each $i = 1, \cdots, M$. We assume that there is no direct interaction among agents, and a centralized controller has a copy of the dynamics of the individual sub-systems but is unaware of the uncertainties associated with individual agents. Therefore, it is reasonable to equip each agent with some adaptation technique to counter the locally observed disturbances.  
  \end{remark}
  \begin{pstatement}
 Present a stabilizing control framework for \eqref{e:system}, which can be generalized to non-interacting, multi-agent systems \eqref{e:system_overall}, respects physical constraints \ref{e:constraints}, optimizes a given performance index, and learns uncertainties in terms of given feature basis functions for each agent.   
  \end{pstatement}
 \par Our proposed solution is based on constraint satisfaction and cost minimization capabilities of MPC, and disturbance rejection capability of the adaptive control. Since agents are non-interacting and an overall objective needs to be achieved, we consider MPC as a centralized controller. Then, for each agent $i$, the control $\control_t^{(i)}$ at time $t$ is defined as:
 \begin{equation} 
 \control_t^{(i)} = \control_t^{a(i)} + \control_t^{m(i)}, 
 \label{e:cuntrol_u}
 \end{equation}
 where $\control_t^{a(i)}$  is the adaptive component and $\control_t^{m(i)}$ is the MPC component.
An illustration of this architecture for two adaptive agents is provided in Fig.~\ref{Fig:Setup}. Each agent knows its feature basis function and computes its adaptive control by using the weight update law defined in Section \ref{s:adaptive}. Each agent transmits its state information and the adaptive control component to the centralized controller, and the centralized controller transmits the corresponding MPC component to each agent. In rest of the manuscript, we consider only one agent for notational simplicity. We revisit the example of two agents again in the numerical experiment section. 
 \par Following (\ref{e:cuntrol_u}), the control $\control_t$ for a single agent is
 \begin{equation}\label{e:total_conrol} 
 \control_t = \control_t^a + \control_t^m, 
 \end{equation}
 where $\control_t^a$ and $\control_t^m$ are the adaptive and MPC components. The centralized MPC controller employs only the nominal dynamics of \eqref{e:system} which is given below for easy reference
\begin{equation}\label{e:nominal}
\st_{t+1} = f(\st_t) + g(\st_t) \control_t^m  \Let \bar{f}(\st_t, \control_t^m).
\end{equation}
Therefore, the dynamics \eqref{e:system} can be written as
\begin{equation}\label{e:actual_agent}
\st_{t+1} = \bar{f}(\st_t, \control_t^m) + g(\st_t) \left( \control_t^a + h(\st_t)\right).
\end{equation}
The centralized controller generates reference state and control trajectories off-line for (each) agent by using the nominal dynamics \eqref{e:nominal}. To ensure that actual constraints can be satisfied by (each) agent  \eqref{e:actual_agent}, the central controller appropriately tightens the constraints (\secref{s:MPC}). These reference trajectories are used by the centralized MPC to generate control sequences for \eqref{e:actual_agent} so that the state and control sequences stay within tubes around the nominal reference trajectory. The MPC components computed by the centralized controller are transmitted to (each) agent at every time step.
\par In a broad sense, the MPC component $\control_t^m$ is responsible for ISS of closed-loop states in the presence of bounded disturbances, and the adaptive control component $\control_t^a$ is responsible for disturbance rejection by adapting to the disturbance weight $W$. 
\section{Adaptive agent}\label{s:adaptive}
In the present article, we adopt the weight update law of \cite{haddad_adaptive} to learn the weight $W$ with help of an adaptive controller $\control_t^a = K_t \phi(\st_t)$. The dynamics \eqref{e:nominal} can be written as
\begin{equation}\label{e:nominal_adaptive}
\st_{t+1} = \bar{f}(\st_t, \control_t^m) + g(\st_t)\left( K_t + W \right) \phi(\st_t). 
\end{equation}
For some $\varepsilon > 0 $, $\Gamma = \Gamma\transp \succ 0$, $\lambda_{\max}(\Gamma) < 2$, the following weight update law adapted from \cite{haddad_adaptive} is employed here: 
\begin{equation}\label{e:weight_update}
K_{t+1} = K_t - \frac{\Gamma g(\st_t)^{\dagger}(\st_{t+1} - \bar{f}(\st_t,\control_t^m)) \phi(\st_t)\transp}{ \varepsilon + \norm{\phi(\st_t)}^2},
\end{equation}
when $g(\st_t) \neq 0$, otherwise $K_{t+1} = K_t$.
Let us define $\tilde{\control}_t \Let (K_t + W)\phi(\st_t)$ and $\tilde{K}_t \Let K_t + W$, then by adding $W$ to both sides of \eqref{e:weight_update} the weight update law can be written as
\begin{equation}\label{e:weight_update_ana}
\tilde{K}_{t+1} = \tilde{K}_t - \frac{\Gamma \tilde{\control}_t \phi(\st_t)\transp}{\varepsilon + \norm{\phi(\st_t)}^2}.
\end{equation}
Since $\tilde{\control}_t$ is not known, \eqref{e:weight_update_ana} is used only for analysis. We have the following result:
\begin{lemma}\label{lem:convergence_error}
	Let us consider the dynamics \eqref{e:nominal_adaptive} and the weight update law \eqref{e:weight_update}, then $\norm{g(\st_t)\tilde{\control}_t} \rightarrow 0$ as $t \rightarrow \infty$.
\end{lemma}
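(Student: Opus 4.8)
My plan is to build a Lyapunov function for the weight error $\tilde{K}_t$ whose one-step decrement is controlled by $\norm{\tilde{\control}_t}^2$, and then to telescope. The natural candidate is
\[
V_t \Let \trace\bigl(\tilde{K}_t\transp \Gamma\inverse \tilde{K}_t\bigr),
\]
which is well defined and nonnegative since $\Gamma = \Gamma\transp \succ 0$. The inverse gain $\Gamma\inverse$ is placed deliberately: when the analysis form \eqref{e:weight_update_ana} of the update is substituted into $V_{t+1}$, this factor cancels the $\Gamma$ multiplying the correction term, so that the two linear (cross) terms of the expansion reduce to clean scalars.

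Next I would expand $V_{t+1}$. Writing $\alpha_t \Let 1/(\varepsilon + \norm{\phi(\st_t)}^2)$ and using $\tilde{\control}_t = \tilde{K}_t\phi(\st_t)$ together with the cyclic invariance of the trace, each of the two cross terms collapses to $\alpha_t\norm{\tilde{\control}_t}^2$ and the quadratic term to $\alpha_t^2\norm{\phi(\st_t)}^2\,\tilde{\control}_t\transp\Gamma\tilde{\control}_t$, so that
\[
V_{t+1} - V_t = -2\alpha_t\norm{\tilde{\control}_t}^2 + \alpha_t^2\norm{\phi(\st_t)}^2\,\tilde{\control}_t\transp\Gamma\tilde{\control}_t.
\]
I would then bound $\tilde{\control}_t\transp\Gamma\tilde{\control}_t \leq \lambda_{\max}(\Gamma)\norm{\tilde{\control}_t}^2$ and observe that $\alpha_t\norm{\phi(\st_t)}^2 < 1$ because $\varepsilon > 0$; together with the hypothesis $\lambda_{\max}(\Gamma) < 2$ this forces the resulting coefficient to be strictly negative. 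Invoking the standing bound $\norm{\phi(\st_t)} \leq \delta_{\phi}$ to lower-bound $\alpha_t$ then gives the uniform decrease
\[
V_{t+1} - V_t \leq -c\,\norm{\tilde{\control}_t}^2, \qquad c \Let \frac{2 - \lambda_{\max}(\Gamma)}{\varepsilon + \delta_{\phi}^2} > 0,
\]
valid whenever $g(\st_t) \neq 0$; at steps where $g(\st_t) = 0$ the law \eqref{e:weight_update} leaves $\tilde{K}_t$ unchanged, so $V_{t+1} = V_t$ while simultaneously $g(\st_t)\tilde{\control}_t = 0$.

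Since $V_t \geq 0$ is nonincreasing it converges, and telescoping yields $\sum_{t:\,g(\st_t)\neq 0}\norm{\tilde{\control}_t}^2 \leq V_0/c < \infty$. Hence $\norm{\tilde{\control}_t} \to 0$ along the updating indices, and since $\norm{g(\st_t)\tilde{\control}_t} \leq \delta_g\norm{\tilde{\control}_t}$ there, while $g(\st_t)\tilde{\control}_t = 0$ on the complementary indices, an elementary tail argument over both cases delivers $\norm{g(\st_t)\tilde{\control}_t} \to 0$ on the full time axis.

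The trace algebra in the expansion is the only error-prone step: the cancellation of $\Gamma$ in the cross terms depends on both the symmetric placement of $\Gamma\inverse$ in $V_t$ and on $\Gamma = \Gamma\transp$, and the final sign rests entirely on pairing $\lambda_{\max}(\Gamma) < 2$ against the normalizer $\alpha_t\norm{\phi(\st_t)}^2 < 1$. The one genuinely non-mechanical point I expect is the bookkeeping at steps where $g(\st_t) = 0$: the summability holds only over the updating indices, so convergence of $\norm{g(\st_t)\tilde{\control}_t}$ over \emph{all} $t$ (rather than merely along a subsequence) must be argued by treating the non-updating steps separately, where the quantity is identically zero.
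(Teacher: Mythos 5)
Your proof is correct, and its core coincides with the paper's: the identical Lyapunov function $V_t=\trace\bigl(\tilde{K}_t\transp\Gamma\inverse\tilde{K}_t\bigr)$, the identical expansion of $V_{t+1}-V_t$ via the analysis form \eqref{e:weight_update_ana}, and the same telescoping. The differences lie in how the argument is closed, and they are worth recording. First, the paper keeps the normalized decrement $-\tilde{\control}_t\transp(2I-\Gamma)\tilde{\control}_t/(\varepsilon+\norm{\phi(\st_t)}^2)$, deduces an $\ell_2$ property of the normalized signal, and then invokes the Key Technical Lemma \cite[Lemma 6.2.1]{goodwin_adaptive_filtering} together with the uniform bound on $\norm{\phi(\cdot)}$ to strip the normalization; you instead fold $\norm{\phi(\st_t)}\leq\delta_{\phi}$ into the decrement from the outset, obtaining $V_{t+1}-V_t\leq-c\norm{\tilde{\control}_t}^2$ with $c=(2-\lambda_{\max}(\Gamma))/(\varepsilon+\delta_{\phi}^2)>0$, after which summability and convergence to zero follow from elementary facts about convergent series, with no external lemma. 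The two routes are mathematically equivalent (the paper itself remarks that the Key Technical Lemma applies only ``trivially'' here), but yours is self-contained. Second, and more substantively, you track the frozen steps where $g(\st_t)=\zeros$: there \eqref{e:weight_update} sets $K_{t+1}=K_t$, so $V_{t+1}=V_t$ and the decrement bound simply does not apply. The paper telescopes as if the bound held at every $t$, and its opening reduction --- that it suffices to show $\norm{\tilde{\control}_t}\to 0$ --- claims more than its estimate delivers if frozen steps occur infinitely often, since nothing controls $\norm{\tilde{\control}_t}$ at those times. Your case split (summability over the updating indices only, with $g(\st_t)\tilde{\control}_t=\zeros$ identically on the rest) is precisely what is needed to conclude $\norm{g(\st_t)\tilde{\control}_t}\to 0$ over the full time axis, so your bookkeeping repairs a small gap that the paper's own proof leaves open.
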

\par A proof of Lemma \ref{lem:convergence_error} is given in the appendix. Further, the adaptive control component is shown to be bounded in the following Lemma:
\begin{lemma}\label{lem:bound_adaptive_control}
	The adaptive component of control $\control_t^a$ is bounded by $\authority^a \Let \left(\sqrt{\frac{\lambda_{\max}(\Gamma)}{\lambda_{\min}(\Gamma)}} + 1 \right)\norm{W}_F\delta_{\phi}$ for all $t$. Further,
	$
	\norm{\tilde{K}_t}_F \leq \sqrt{\frac{\lambda_{\max}(\Gamma)}{\lambda_{\min}(\Gamma)}}\norm{W}_F
	$ for all $t$.
\end{lemma}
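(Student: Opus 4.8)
The plan is to introduce the weighted quadratic energy $V_t \Let \trace\!\big(\tilde{K}_t\transp \Gamma\inverse \tilde{K}_t\big)$, which is well defined because $\Gamma \succ 0$, and to show that $V_t$ is non-increasing along the analysis dynamics \eqref{e:weight_update_ana}. Both bounds then follow by sandwiching $V_t$ between multiples of $\norm{\tilde{K}_t}_F^2$ and the initial value $V_0$. The second (norm-on-$\tilde{K}_t$) bound is the substantive one, and the first (norm-on-$\control_t^a$) bound is a short consequence of it via $K_t = \tilde{K}_t - W$.

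First I would substitute $\tilde{\control}_t = \tilde{K}_t\phi(\st_t)$ into \eqref{e:weight_update_ana}, writing the update as $\tilde{K}_{t+1} = \tilde{K}_t - a_t\,\Gamma\,\tilde{K}_t\phi(\st_t)\phi(\st_t)\transp$ with $a_t \Let 1/\big(\varepsilon + \norm{\phi(\st_t)}^2\big)$. Expanding $V_{t+1}$ and using $\Gamma = \Gamma\transp$ together with the cyclic property of the trace, the two cross terms each collapse to $-a_t\norm{\tilde{\control}_t}^2$ (since $\phi(\st_t)\transp\tilde{K}_t\transp = \tilde{\control}_t\transp$), while the remaining quadratic term reduces to $a_t^2\,(\tilde{\control}_t\transp\Gamma\tilde{\control}_t)\,\norm{\phi(\st_t)}^2$. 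Hence
\begin{equation*}
V_{t+1} - V_t = -2a_t\norm{\tilde{\control}_t}^2 + a_t^2\,\big(\tilde{\control}_t\transp\Gamma\tilde{\control}_t\big)\,\norm{\phi(\st_t)}^2 .
\end{equation*}
To bound this sign-indefinite expression I would use $\tilde{\control}_t\transp\Gamma\tilde{\control}_t \leq \lambda_{\max}(\Gamma)\norm{\tilde{\control}_t}^2$ and $a_t\norm{\phi(\st_t)}^2 = \norm{\phi(\st_t)}^2/\big(\varepsilon + \norm{\phi(\st_t)}^2\big) < 1$, giving $V_{t+1}-V_t \leq a_t\norm{\tilde{\control}_t}^2\big(\lambda_{\max}(\Gamma)\,a_t\norm{\phi(\st_t)}^2 - 2\big) \leq a_t\norm{\tilde{\control}_t}^2\big(\lambda_{\max}(\Gamma) - 2\big) \leq 0$, where the last inequality is exactly the standing hypothesis $\lambda_{\max}(\Gamma) < 2$. (When $g(\st_t)=\zeros$ the gain is frozen, so $V_{t+1}=V_t$ and monotonicity is trivial.) Therefore $V_t \leq V_0$ for all $t$.

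Finally I would sandwich. Taking the natural initialization $K_0 = \zeros$ gives $\tilde{K}_0 = W$, so $V_0 = \trace\!\big(W\transp\Gamma\inverse W\big) \leq \lambda_{\max}(\Gamma\inverse)\norm{W}_F^2 = \norm{W}_F^2/\lambda_{\min}(\Gamma)$; on the other side $V_t \geq \lambda_{\min}(\Gamma\inverse)\norm{\tilde{K}_t}_F^2 = \norm{\tilde{K}_t}_F^2/\lambda_{\max}(\Gamma)$. Chaining $\norm{\tilde{K}_t}_F^2/\lambda_{\max}(\Gamma) \leq V_t \leq V_0 \leq \norm{W}_F^2/\lambda_{\min}(\Gamma)$ yields $\norm{\tilde{K}_t}_F \leq \sqrt{\lambda_{\max}(\Gamma)/\lambda_{\min}(\Gamma)}\,\norm{W}_F$, the second claim. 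For the first claim, since $K_t = \tilde{K}_t - W$, submultiplicativity of norms and the triangle inequality give $\norm{\control_t^a} = \norm{K_t\phi(\st_t)} \leq \big(\norm{\tilde{K}_t}_F + \norm{W}_F\big)\delta_{\phi} \leq \authority^a$.

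The main obstacle is the energy-difference computation together with the sign argument: the raw increment is indefinite, and the whole proof hinges on showing that the $O(a_t^2)$ term cannot overwhelm the $O(a_t)$ decrease. This is precisely where the denominator normalization (forcing $a_t\norm{\phi(\st_t)}^2 < 1$) and the spectral constraint $\lambda_{\max}(\Gamma) < 2$ must be invoked together — neither alone suffices. A secondary subtlety worth flagging is that the clean $\norm{W}_F$-valued bound depends on $K_0 = \zeros$, so that $V_0$ is controlled by $\norm{W}_F$; a different initialization would instead produce a bound in terms of $\norm{\tilde{K}_0}_F$.
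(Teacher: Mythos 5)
Your proof is correct and takes essentially the same route as the paper: the paper works with the identical energy $V_a(K_t) = \trace\big(\tilde{K}_t\transp \Gamma\inverse \tilde{K}_t\big)$, establishes its monotone decrease (inside the proof of Lemma~\ref{lem:convergence_error}, via the same cross-term/quadratic-term expansion and the hypothesis $\lambda_{\max}(\Gamma)<2$), and then sandwiches exactly as you do using $K_0=\zeros$, $\tilde K_0 = W$, and the eigenvalue bounds $\lambda_{\min}(\Gamma\inverse)\norm{\tilde K_t}_F^2 \leq V_a(K_t) \leq \lambda_{\max}(\Gamma\inverse)\norm{\tilde K_t}_F^2$, finishing with the triangle inequality for $\norm{\control_t^a}$. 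The only difference is organizational: the paper cites the already-derived decrease of $V_a$ rather than re-deriving it inside this Lemma's proof, whereas your argument is self-contained.
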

\par A proof of Lemma \ref{lem:bound_adaptive_control} is given in the appendix.

\begin{remark}\label{rem:apparent_disturbance}
	It is immediate to note from Lemma \ref{lem:bound_adaptive_control} that \[\norm{g(\st_t)\tilde{\control}_t} \leq \delta_g\sqrt{\frac{\lambda_{\max}(\Gamma)}{\lambda_{\min}(\Gamma)}}\norm{W}_F\delta_{\phi} \Let w_{\max}^{\prime} \text{ for all } t. \]
\end{remark}

\section{Centralized controller}\label{s:MPC}
In this section we provide a summary and important results of tube-based MPC relevant to the context of the present article. The discussion closely follows \cite{Mayne_tube_NLMPC} with differences related to the adaptive controller discussed in \secref{s:adaptive}. We note that since a control part is used by the adaptive controller, full control authority is not available to MPC. In addition, although the disturbance in dynamical system \eqref{e:system} at time $t$ is $g(x_t)h(x_t)$, the disturbance observed by MPC is $g(\st_t)\tilde{\control}_t$ \eqref{e:nominal_adaptive}. We clarify these distinctions in the following. 
\subsection{Offline reference governor}\label{s:reference_governor}
Let us fix an optimization horizon $N \in \N$, and let $Q\succeq \zeros$ and $R \succ \zeros$ be given positive semi-definite and positive definite matrices, respectively. In view of Lemma \ref{lem:bound_adaptive_control}, we fix $\Gamma$ such that $\authority > \authority^a$ and define a set 
$\controlset^{\prime} \Let \{v \in \R^m \mid \norm{v}_{\infty} \leq \authority - \authority^a \}.$   
The offline reference governor consists of the following optimization problem without terminal cost but with a terminal constraint:
\begin{equation}\label{e:reference_governor}
\begin{aligned}
\min_{(\control_i^r)_{i=0}^{N-1}} & \quad \sum_{i=0}^{N-1} \norm{\st_i^r}^2_Q + \norm{\control_i^r}_R^2  \\
\sbjto \quad  & \st_0^r = \st_0, \st_N^r = \zeros, \\
&\st_{i+1}^r = \bar{f}(\st_i^r, \control_i^r), \st_i^r \in \mathcal{X}_r \subset \mathcal{X},  \\
& \control_i^r \in \controlset_r \subset \controlset^{\prime}; i = 0, \ldots, N-1.  
\end{aligned}
\end{equation}
The tightened constraints $\st_i^r \in \mathcal{X}_r$ and $\control_i^r \in \controlset_r\Let \left\{ v \in \R^m \mid \norm{v}_{\infty} \leq \authority^r \right\}$ are used in the reference governor design for a nominal dynamics so that the actual constraints can be satisfied for uncertain system \eqref{e:system}. We refer readers to \cite{Muller_constraint_tightening, Mesbah_constraint_tightening, convex_enclosures} for recent results on constraint tightening. In this article, we follow the approach of \cite[Section 7]{Mayne_tube_NLMPC} for the determination of $\mathcal{X}_r$ and $\controlset_r$. We can define reference sequences for state and control as follows:
\begin{align}
(\st_t^r)_{t \in \Nz} &\Let \{ (\st_t^r)_{t = 0}^N, 0, \ldots \}, \label{e:reference_signal_state}\\
(\control_t^r)_{t \in \Nz} &\Let \{ (\control_t^r)_{t = 0}^{N-1}, 0, \ldots \} \label{e:reference_signal_control}.
\end{align} 
\subsection{Online reference tracking}
In this section, we consider the state and control reference trajectories defined in \eqref{e:reference_signal_state} and \eqref{e:reference_signal_control}, respectively, for nominal system, and present a reference tracking MPC for actual system. Let 
\[ \costps(\st_{t+i \mid t}, \control_{t+i \mid t}) \Let \norm{\st_{t+i \mid t} - \st_{t+i}^r}_Q^2 + \norm{\control_{t+i \mid t} - \control_{t+i}^r}_R^2\]
be the cost per stage at time $t+i$ predicted at time $t$. Let $\costfinal^{\prime}(\cdot)$ be a local control Lyapunov function for the nominal system, defined by $\costfinal^{\prime}(x) \Let x \transp Q_f x$ for some $Q_f \succ 0$. Let $\mathcal{X}_f \Let \{x \mid  \costfinal^{\prime}(x) \leq \alpha \}$ for some $\alpha > 0$ be a terminal set. In the tube-based MPC approach, the terminal cost functional is defined such that $\costfinal(\cdot) = \beta \costfinal^{\prime}(\cdot)$ for some $\beta \geq 1$. 
We have the following assumption:
\begin{assumption}\label{as:stability}
	\rm{
		There exists a control $\control^{\prime} \in \controlset^{\prime} $ such that the following holds
		\begin{equation}
		\costfinal \left( \bar{f}(x, \control^{\prime}) \right) + \costps(x, \control^{\prime}) -\costfinal(x)  \leq 0 \text{ for every } x \in \mathcal{X}_f.
		\end{equation}		
	}
\end{assumption}
We define the following cost
\begin{equation}
V\left( \st_t, (\control_{t+i \mid t})_{i=0}^{N-1} \right) \Let \costfinal(\st_{t+N \mid t}) + \sum_{i=0}^{N-1}\costps(\st_{t+i \mid t}, \control_{t+i \mid t})
\end{equation}
and the following optimization problem is solved iteratively online:
\begin{equation}\label{e:MPC}
\begin{aligned}
V_m(\st_t) \Let & \min_{(\control_{t+i \mid t})_{i=0}^{N-1}}  V(\st_t, (\control_{t+i \mid t})_{i=0}^{N-1})  \\
\sbjto \quad & \st_{t \mid t} = \st_t \\
& \st_{t+ i +1 \mid t} = \bar{f}(\st_{t+i \mid t}, \control_{t+i\mid t})   \\
& \control_{t+i \mid t} + \control_t^a \in \controlset; \quad i = 0, \cdots, N-1. 
\end{aligned}
\end{equation}
Note that the above optimal control problem does not involve terminal constraints. The terminal constraint set $\mathcal{X}_f$ is defined only for the purpose of analysis.
Let $X^c(\st_t^r)$ be a level set of radius $c$ around $\st_t^r$ at time $t$, defined by $X^c(\st_t^r) \Let \{x \mid V_m(\st_t) \leq c\} $. For any $c > 0$, $\beta \geq \frac{c}{\alpha}$, for all $\st_{t} \in X^c(\st_t^r)$ implies the terminal state $\st_{t+N \mid t} \in \mathcal{X}_f$ \cite[Proposition 3.16]{Mayne_tube_NLMPC}. A tube is defined as a sequence of level sets
\begin{equation}
X_c \Let \{ X^c(\st_0^r), X^c(\st_1^r), X^c(\st_2^r), \ldots \}.
\end{equation} 
The optimal value of \eqref{e:MPC} satisfies the following property: 
\begin{lemma}\label{lem:candidate_lyapunov}
	There exist $c_1, c_2 > 0$ such that 
	\begin{equation}
	c_1 \norm{\st_t - \st_t^r}^2 \leq V_m(\st_t) \leq c_2 \norm{\st_t - \st_t^r}^2,
	\end{equation}
	for every $\st_t \in \R^d$ and $t \in \Nz$.
\end{lemma}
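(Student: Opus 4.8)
\emph{Overall plan.} This sandwich estimate is the bookkeeping that promotes $V_m$ to a Lyapunov candidate for the tracking error $\st_t-\st_t^r$, so I would prove the two inequalities independently: the upper bound by exhibiting one explicit feasible control sequence for \eqref{e:MPC} and estimating its cost, and the lower bound by retaining a single nonnegative term from the optimal cost.

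\emph{Upper bound.} I would take as a (suboptimal but feasible) candidate the shifted reference control itself, $\control_{t+i\mid t}=\control_{t+i}^r$ for $i=0,\dots,N-1$. This is admissible: since $\control_{t+i}^r\in\controlset_r\subset\controlset^{\prime}=\{v:\norm{v}_\infty\le\authority-\authority^a\}$ and, by Lemma \ref{lem:bound_adaptive_control}, $\norm{\control_t^a}_\infty\le\norm{\control_t^a}\le\authority^a$, we get $\norm{\control_{t+i}^r+\control_t^a}_\infty\le\authority$, so the input constraint $\control_{t+i\mid t}+\control_t^a\in\controlset$ holds for every $i$, and \eqref{e:MPC} carries no state or terminal constraints. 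Under this candidate the control-error terms of $\costps$ vanish, and writing $\st_{t+i+1\mid t}-\st_{t+i+1}^r=\bigl(f(\st_{t+i\mid t})-f(\st_{t+i}^r)\bigr)+\bigl(g(\st_{t+i\mid t})-g(\st_{t+i}^r)\bigr)\control_{t+i}^r$ and invoking the Lipschitz constants $L_f,L_g$ with $\norm{\control_{t+i}^r}_\infty\le\authority^r$ yields the one-step estimate $\norm{\st_{t+i+1\mid t}-\st_{t+i+1}^r}\le\bar L\,\norm{\st_{t+i\mid t}-\st_{t+i}^r}$, where $\bar L\Let L_f+\sqrt{m}\,\authority^r L_g$; induction from $\st_{t\mid t}=\st_t$ then gives $\norm{\st_{t+i\mid t}-\st_{t+i}^r}\le\bar L^{\,i}\norm{\st_t-\st_t^r}$. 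For the terminal term I would use that $\st_{t+N}^r=\zeros$ for every $t\in\Nz$ (from the terminal constraint $\st_N^r=\zeros$ in \eqref{e:reference_governor} and the zero-padding in \eqref{e:reference_signal_state}), so $\costfinal(\st_{t+N\mid t})=\beta\norm{\st_{t+N\mid t}-\st_{t+N}^r}_{Q_f}^2\le\beta\lambda_{\max}(Q_f)\bar L^{\,2N}\norm{\st_t-\st_t^r}^2$. Summing stage and terminal contributions gives $V_m(\st_t)\le c_2\norm{\st_t-\st_t^r}^2$ with $c_2\Let\beta\lambda_{\max}(Q_f)\bar L^{\,2N}+\lambda_{\max}(Q)\sum_{i=0}^{N-1}\bar L^{\,2i}$, valid for all $\st_t\in\R^d$.

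\emph{Lower bound.} Here I would discard every term of the optimal cost except the $i=0$ state penalty, which is not subject to optimization since $\st_{t\mid t}=\st_t$ is fixed: $V_m(\st_t)\ge\norm{\st_{t\mid t}-\st_t^r}_Q^2=\norm{\st_t-\st_t^r}_Q^2\ge\lambda_{\min}(Q)\norm{\st_t-\st_t^r}^2$, suggesting $c_1=\lambda_{\min}(Q)$.

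\emph{Main obstacle.} The upper bound is routine once feasibility of the reference-control candidate is checked. The delicate point is the \emph{strict positivity} of $c_1$: the one-term estimate only delivers $c_1=\lambda_{\min}(Q)$, which is positive precisely when $Q\succ0$. Under the stated $Q\succeq\zeros$, a tracking error lying in $\ker Q$ is invisible to the $i=0$ stage cost, and I would have to recover it either from the control penalty (via a controllability/reachability lower bound on the input effort needed to cancel such an error over the horizon) or from the positive definite terminal weight $Q_f$ together with a detectability (IOSS-type) property of $\bar f$ relative to $Q$. This is where the genuine work lies, and it is the reason I expect the clean quadratic lower bound to hold as stated when the stage cost, effectively $Q$, is positive definite.
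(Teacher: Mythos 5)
Your argument is essentially the paper's own proof: the upper bound there is likewise obtained by evaluating the cost at the reference controls $(\control_{t+i}^r)_{i=0}^{N-1}$ and running the identical Lipschitz induction $\norm{\st_{t+i\mid t}-\st_{t+i}^r}\leq \bar L^{i}\norm{\st_t-\st_t^r}$ with $\bar L=L_f+L_g\authority^r$ and $\st_{t+N}^r=\zeros$ handling the terminal term, while the existence of $c_1>0$ is simply deferred to \cite[Proposition 2]{Mayne_tube_NLMPC}, i.e.\ the one-term stage-cost estimate you wrote down. Your extra care is warranted rather than superfluous: the feasibility check of the candidate against $\control_{t+i\mid t}+\control_t^a\in\controlset$ via Lemma \ref{lem:bound_adaptive_control}, the factors $\beta$ and $\bar L^{2i}$ (the paper's stated $c_2$ drops both the squares and the $\beta$), and especially the observation that $c_1=\lambda_{\min}(Q)$ is positive only if $Q\succ\zeros$ --- under the paper's stated hypothesis $Q\succeq\zeros$ the lower bound genuinely requires a positive definite stage cost or a detectability-type argument, a gap that the paper's citation quietly absorbs rather than resolves.
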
  
\par A proof of Lemma \ref{lem:candidate_lyapunov} is given in the appendix. Let us define a set 
\[ \mathds{W}^{\prime} \Let \{v \in \R^d \mid \norm{v} \leq w_{\max}^{\prime} \}\]
where $w_{\max}^{\prime}$ is defined in Remark \ref{rem:apparent_disturbance}. Since the disturbance observed by MPC belongs to the set $\mathds{W}^{\prime}$, we recast the following results for completeness.
\begin{lemma}{\cite[Proposition 3]{Mayne_tube_NLMPC}}\label{lem:lipschitz_value}
	There exists $c_3>0$ such that $\st_t \mapsto V_m(\st_t)$ is Lipschitz continuous with Lipschitz constant $c_3$, when $\st_t \in X_c(\st_t^r)+ \mathds{W}^{\prime}$ for every $t$.
\end{lemma}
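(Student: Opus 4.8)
The plan is to use the standard suboptimality (sub-trajectory) argument for parametric optimal value functions. First I would take two states $\st$ and $\st'$ both lying in the set $X_c(\st_t^r) + \mathds{W}^{\prime}$ and denote by $(\control_{i}\opt)_{i=0}^{N-1}$ a minimizer of \eqref{e:MPC} started from $\st$. The crucial structural observation is that the only restriction on the decision variables in \eqref{e:MPC} is the control constraint $\control_{t+i \mid t} + \control_t^a \in \controlset$, which is independent of the initial state; hence $(\control_{i}\opt)_{i=0}^{N-1}$ is automatically feasible for the problem started from $\st'$. By optimality at $\st$ and suboptimality at $\st'$,
\[
V_m(\st') - V_m(\st) \leq V\!\left(\st', (\control_{i}\opt)_{i=0}^{N-1}\right) - V\!\left(\st, (\control_{i}\opt)_{i=0}^{N-1}\right),
\]
and since the stage terms $\norm{\control_{t+i \mid t} - \control_{t+i}^r}_R^2$ are identical for both initial states, only the state-dependent stage costs and the terminal cost survive in this difference.

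Next I would propagate the two trajectories driven by the common control sequence. Writing $\st_i$ and $\st_i'$ for the predicted states from $\st$ and $\st'$, the one-step map $\st \mapsto \bar{f}(\st, \control^m) = f(\st) + g(\st)\control^m$ is Lipschitz in its first argument with constant $L \Let L_f + L_g(\authority - \authority^a)$, using the Lipschitz bounds on $f$ and $g$ together with the control bound inherited from $\control_{t+i \mid t} \in \controlset^{\prime}$. Iterating this one-step estimate over the horizon gives the geometric growth bound $\norm{\st_i' - \st_i} \leq L^i \norm{\st' - \st}$ for $i = 0, \ldots, N$.

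Then I would bound the surviving cost difference term by term. For each quadratic stage cost,
\[
\abs{\norm{\st_i' - \st_i^r}_Q^2 - \norm{\st_i - \st_i^r}_Q^2} \leq \lambda_{\max}(Q)\,\norm{\st_i' - \st_i}\left(\norm{\st_i' - \st_i^r} + \norm{\st_i - \st_i^r}\right),
\]
and the identical bound with $\beta Q_f$ in place of $Q$ handles the terminal cost. The second factor is uniformly bounded by some $D > 0$ because, by Lemma \ref{lem:candidate_lyapunov}, the sublevel set $X^c(\st_t^r)$ lies in the ball of radius $\sqrt{c/c_1}$ about $\st_t^r$, so $X_c(\st_t^r) + \mathds{W}^{\prime}$ is bounded, and applying $\bar{f}$ with bounded control over the finite horizon keeps both trajectories in a bounded region independent of $t$. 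Substituting the growth bound, summing over $i$, and collecting $L^i$, $\lambda_{\max}(Q)$, $\lambda_{\max}(\beta Q_f)$, $N$ and $D$ yields $V_m(\st') - V_m(\st) \leq c_3\,\norm{\st' - \st}$; exchanging the roles of $\st$ and $\st'$ gives the matching lower bound and hence Lipschitz continuity with a constant $c_3$ uniform in $t$.

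The main obstacle I anticipate is precisely that the quadratic costs are not globally Lipschitz: the factor $\norm{\st_i' - \st_i^r} + \norm{\st_i - \st_i^r}$ would be unbounded without restricting the domain. Confining attention to the compact tube-plus-disturbance set $X_c(\st_t^r) + \mathds{W}^{\prime}$, and verifying that the \emph{off-optimal} trajectory generated from $\st'$ under $\st$'s optimal control also stays in a bounded set for all $i \leq N$ (which follows from the linear-growth estimate $\norm{\bar{f}(\st, \control^m)} \leq \norm{\bar{f}(\zeros, \zeros)} + L\norm{\st} + L_g(\authority - \authority^a)$ inherited from Lipschitz continuity), is exactly what makes the uniform bound $D$, and hence $c_3$, well defined.
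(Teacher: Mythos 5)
The paper gives no proof of this lemma to compare against: it is imported (with the disturbance set replaced by $\mathds{W}^{\prime}$) from Proposition 3 of the cited tube-based MPC reference, and the appendix only proves Lemmas \ref{lem:convergence_error}--\ref{lem:candidate_lyapunov} and Proposition \ref{th:main}. Your argument is, however, exactly the standard proof behind that cited result, and its structure is sound: since \eqref{e:MPC} has no state or terminal constraints, the feasible set of control sequences does not move with the initial state, so the value difference is bounded by the cost difference along two trajectories driven by one optimal sequence; the geometric estimate $\norm{\st_i'-\st_i}\leq L^i\norm{\st'-\st}$, the local Lipschitz property of quadratic forms, and the uniform boundedness of $X^c(\st_t^r)+\mathds{W}^{\prime}$ (via Lemma \ref{lem:candidate_lyapunov} and the fact that the reference sequence consists of finitely many points followed by $\zeros$) then produce a constant $c_3$ independent of $t$.

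Two imprecisions should be fixed, neither fatal. First, the constraint in \eqref{e:MPC} is $\control_{t+i\mid t}+\control_t^a \in \controlset$, \emph{not} $\control_{t+i\mid t}\in\controlset^{\prime}$; since $\norm{\control_t^a}\leq \authority^a$ by Lemma \ref{lem:bound_adaptive_control}, the optimal controls satisfy only $\norm{\control_{t+i\mid t}}_{\infty}\leq \authority + \authority^a$, so your one-step constant must be $L = L_f + L_g(\authority+\authority^a)$, not $L_f+L_g(\authority-\authority^a)$; similarly your linear-growth estimate should read $\norm{\bar{f}(\st,\control^m)} \leq \norm{f(\zeros)} + L\norm{\st} + \norm{g(\zeros)}\norm{\control^m}$, with the factor $\norm{g(\zeros)}$ rather than $L_g$ on the control term. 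These change only the value of $c_3$, not its existence. Second, your ``crucial structural observation'' that the feasible set is independent of the initial state holds only if $\control_t^a$ is treated as a fixed parameter of the time-$t$ problem rather than as the function $K_t\phi(\st_t)$ of the state argument; that is the intended reading (Algorithm \ref{a:algo} computes $\control_t^a$ before solving \eqref{e:MPC}, and Theorem \ref{th:stability} applies the lemma to compare $V_m(\st_{t+1})$ with $V_m(\st_{t+1\mid t})$ under the same fixed adaptive component), but you should say so explicitly, since otherwise the constraint set would shift with the initial condition and your feasibility-transfer step would require an additional perturbation argument.
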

\begin{lemma}{\cite[Proposition 2]{Mayne_tube_NLMPC}}\label{lem:bound_on_predicted_next}
If Assumption \ref{as:stability} is satisfied, then for every $\st_t \in X_c+\mathds{W}^{\prime}$ the following holds
	\begin{equation}\label{e:bound_on_predicted_next}
	V_m(\st_{t+1 \mid t}) - V_m(\st_t) \leq -\costps(\st_t, \control_{t \mid t}) \text{ for all } t.
	\end{equation}
\end{lemma}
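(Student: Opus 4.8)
The plan is to follow the standard suboptimality/monotonicity argument for MPC value functions, adapted to the reference-tracking and constraint-tightening setup here. Let $(\control_{t+i\mid t}\opt)_{i=0}^{N-1}$ be the minimizer of \eqref{e:MPC} at $\st_t$, with associated nominal trajectory $(\st_{t+i\mid t}\opt)_{i=0}^{N}$ generated by $\st_{t+i+1\mid t}\opt = \bar{f}(\st_{t+i\mid t}\opt, \control_{t+i\mid t}\opt)$ from $\st_{t\mid t}\opt = \st_t$, so that $\st_{t+1\mid t} = \st_{t+1\mid t}\opt$ and
\[ V_m(\st_t) = \costfinal(\st_{t+N\mid t}\opt) + \sum_{i=0}^{N-1}\costps(\st_{t+i\mid t}\opt, \control_{t+i\mid t}\opt). \]

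First I would construct a feasible (generally suboptimal) candidate sequence for the problem \eqref{e:MPC} started at $\st_{t+1\mid t}$. Following the classical ``shift and append'' idea, I take
\[ \tilde{\control} \Let (\control_{t+1\mid t}\opt, \ldots, \control_{t+N-1\mid t}\opt, \control^{\prime}), \]
where $\control^{\prime} \in \controlset^{\prime}$ is the terminal control furnished by Assumption \ref{as:stability}. Because the reference sequences \eqref{e:reference_signal_state}--\eqref{e:reference_signal_control} are indexed by absolute time, stage $j$ of the shifted problem at time $t+1$ sees exactly the reference that stage $j+1$ of the original problem saw; hence the reproduced states $\st_{t+2\mid t}\opt, \ldots, \st_{t+N\mid t}\opt$ carry over their stage costs unchanged, and the appended state is $\bar{f}(\st_{t+N\mid t}\opt, \control^{\prime})$.

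Next I would verify feasibility of $\tilde{\control}$. For the terminal state, the hypothesis $\st_t \in X_c + \mathds{W}^{\prime}$ together with the level-set property ($\beta \geq c/\alpha$, cited after Lemma \ref{lem:candidate_lyapunov}) places $\st_{t+N\mid t}\opt \in \mathcal{X}_f$, and the Lyapunov decrease in Assumption \ref{as:stability} then yields $\costfinal(\bar{f}(\st_{t+N\mid t}\opt, \control^{\prime})) \leq \costfinal(\st_{t+N\mid t}\opt) \leq \alpha$, so the appended state stays in $\mathcal{X}_f$. Control admissibility of the inherited moves is to be secured from the tightening $\controlset_r \subset \controlset^{\prime}$ and the bound $\norm{\control_t^a} \leq \authority^a$ of Lemma \ref{lem:bound_adaptive_control}, which keep the combined input inside $\controlset$, while $\control^{\prime} \in \controlset^{\prime}$ handles the terminal move.

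Finally I would telescope. By optimality, $V_m(\st_{t+1\mid t}) \leq V(\st_{t+1\mid t}, \tilde{\control})$; subtracting $V_m(\st_t)$, all common interior stage costs cancel and leave
\begin{align*}
V_m(\st_{t+1\mid t}) - V_m(\st_t) \leq {}& \costfinal(\bar{f}(\st_{t+N\mid t}\opt, \control^{\prime})) + \costps(\st_{t+N\mid t}\opt, \control^{\prime}) \\
&\quad - \costfinal(\st_{t+N\mid t}\opt) - \costps(\st_t, \control_{t\mid t}\opt).
\end{align*}
Applying Assumption \ref{as:stability} at $x = \st_{t+N\mid t}\opt \in \mathcal{X}_f$ makes the first three terms contribute a nonpositive amount, giving the claim with $\control_{t\mid t} = \control_{t\mid t}\opt$. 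The main obstacle is the feasibility bookkeeping rather than the cost cancellation: I must ensure the inherited controls remain admissible once the adaptive component $\control_{t}^a$ is updated between successive solves, and that the shifted prediction still terminates in $\mathcal{X}_f$ over the \emph{enlarged} set $X_c + \mathds{W}^{\prime}$; once these are pinned down through the tightening $\authority > \authority^a$, the telescoping argument is routine.
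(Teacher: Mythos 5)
Your argument is the classical shift-and-append construction, and it is worth noting at the outset that the paper offers no proof of this lemma at all: it imports the statement verbatim as \cite[Proposition 2]{Mayne_tube_NLMPC}, so the benchmark is the standard argument behind that citation, which is exactly what you reconstruct. The cost side of your argument is correct: because the references \eqref{e:reference_signal_state}--\eqref{e:reference_signal_control} are indexed by absolute time, the interior stage costs cancel in the telescoping, and Assumption \ref{as:stability} applied at $\st_{t+N\mid t}\opt \in \mathcal{X}_f$ makes the terminal contribution $\costfinal(\bar{f}(\st_{t+N\mid t}\opt,\control^{\prime})) + \costps(\st_{t+N\mid t}\opt,\control^{\prime}) - \costfinal(\st_{t+N\mid t}\opt)$ nonpositive; the containment $\st_{t+N\mid t}\opt \in \mathcal{X}_f$ for $\st_t$ in the enlarged set $X_c + \mathds{W}^{\prime}$ follows from Lemma \ref{lem:lipschitz_value} together with a sufficiently large $\beta$ (e.g.\ $\beta \geq (c + c_3 w_{\max}^{\prime})/\alpha$), as you indicate.

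The genuine gap is the feasibility step that you defer, and the fix you sketch does not close it. The tightening $\controlset_r \subset \controlset^{\prime}$ constrains the \emph{reference} controls $\control_i^r$ in the offline problem \eqref{e:reference_governor}; it imposes nothing on the decision variables of \eqref{e:MPC}, whose only control constraint is $\control_{t+i\mid t} + \control_t^a \in \controlset$, anchored to the adaptive component at time $t$. The problem defining $V_m(\st_{t+1\mid t})$ carries the constraint $\control_{t+1+i\mid t+1} + \control_{t+1}^a \in \controlset$ with a \emph{different} adaptive term, and the inherited moves need not satisfy it: the time-$t$ constraint yields only $\norm{\control_{t+i\mid t}\opt}_{\infty} \leq \authority + \authority^a$, so $\norm{\control_{t+i\mid t}\opt + \control_{t+1}^a}_{\infty}$ can be as large as $\authority + 2\authority^a$, and the bound of Lemma \ref{lem:bound_adaptive_control} cannot repair this. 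Consequently the suboptimality inequality $V_m(\st_{t+1\mid t}) \leq V(\st_{t+1\mid t}, \tilde{\control})$, on which your telescoping rests, has not been justified: $\tilde{\control}$ is not known to be feasible for the problem that defines $V_m(\st_{t+1\mid t})$. The clean repair---evidently what the definition of $\controlset^{\prime}$ and the standing requirement $\authority > \authority^a$ are for---is to impose $\control_{t+i\mid t} \in \controlset^{\prime}$ in \eqref{e:MPC}: the feasible control set then becomes time-invariant, the shifted sequence (with $\control^{\prime} \in \controlset^{\prime}$ appended) is automatically admissible at time $t+1$, and the applied input $\control_t^m + \control_t^a$ still lies in $\controlset$ by Lemma \ref{lem:bound_adaptive_control}. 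Without some such restatement the decrease inequality cannot be established by this route.
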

\section{Algorithm and stability}\label{s:stability} 
In this section, we present our algorithm and provide our main result on stability. 
\begin{algorithm} [H]
	\caption{Centralized MPC with distributed adaptation}
	\label{a:algo}
	\begin{algorithmic}[1]
		\Require  $\st_0, \phi$
		\State choose $\Gamma$ 
		\State initialize $K_0 = \zeros$, $t=0$ 
		\For{each $t$}		
		\State \label{al:first_step}compute $\control_t^a = K_t \phi(\st_t)$ 
		\State solve \eqref{e:MPC}, set $\control_t^m = \control_{t \mid t}$ 
		\State apply $\control_t = \control_t^m + \control_t^a$ to the system and measure $\st_{t+1}$
		\State compute $K_{t+1}$ by the weight update law \eqref{e:weight_update}
        \EndFor
	\end{algorithmic}
\end{algorithm}
\par We have the following main results:
\begin{theorem}\label{th:stability}
Consider the dynamical system \eqref{e:system} and let Assumption \ref{as:stability} hold. Then under the control computed by the Algorithm \ref{a:algo} the closed-loop system \eqref{e:system} is ISS and $\norm{\st_t} \rightarrow 0$ as $t \rightarrow \infty$.	
\end{theorem}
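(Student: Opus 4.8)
The plan is to use the online MPC value function $V_m$ as an ISS-Lyapunov function for the tracking error $e_t \Let \st_t - \st_t^r$, treating the apparent disturbance $d_t \Let g(\st_t)\tilde{\control}_t$ as the exogenous input. First I would record that, since $\control_t^a = K_t\phi(\st_t)$, $h(\st_t) = W\phi(\st_t)$ and $\tilde{\control}_t = (K_t + W)\phi(\st_t)$, the closed loop \eqref{e:actual_agent} satisfies $\st_{t+1} = \st_{t+1 \mid t} + d_t$; that is, the realized state is the MPC-predicted nominal successor $\st_{t+1 \mid t} = \bar f(\st_t, \control_t^m)$ perturbed by exactly $d_t$. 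By Remark \ref{rem:apparent_disturbance} we have $\norm{d_t} \leq w_{\max}^{\prime}$, so $d_t \in \mathds{W}^{\prime}$ for all $t$.

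The core one-step estimate combines the nominal descent with Lipschitz continuity. On the tube neighborhood $X_c + \mathds{W}^{\prime}$, Lemma \ref{lem:bound_on_predicted_next} gives $V_m(\st_{t+1\mid t}) - V_m(\st_t) \leq -\costps(\st_t, \control_{t \mid t})$, while Lemma \ref{lem:lipschitz_value} together with $\norm{\st_{t+1} - \st_{t+1\mid t}} = \norm{d_t}$ gives $V_m(\st_{t+1}) \leq V_m(\st_{t+1\mid t}) + c_3\norm{d_t}$. Adding these yields the dissipation inequality
\begin{equation}\label{e:plan_descent}
V_m(\st_{t+1}) - V_m(\st_t) \leq -\costps(\st_t, \control_{t \mid t}) + c_3\norm{d_t},
\end{equation}
valid whenever $\st_t$ lies in the tube neighborhood where Lemmas \ref{lem:lipschitz_value} and \ref{lem:bound_on_predicted_next} apply.

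The main obstacle is to make \eqref{e:plan_descent} legitimate at every step, i.e.\ to show that the realized trajectory never leaves $X_c + \mathds{W}^{\prime}$. I would establish this robust positive invariance of the tube by induction on $t$, using the quadratic sandwich $c_1\norm{e_t}^2 \leq V_m(\st_t) \leq c_2\norm{e_t}^2$ of Lemma \ref{lem:candidate_lyapunov}: with the constraint tightening sized as in \secref{s:reference_governor}, the radius $c$ is taken large enough relative to $c_3 w_{\max}^{\prime}$ so that $V_m(\st_t) \leq c$ forces $V_m(\st_{t+1}) \leq c$ through \eqref{e:plan_descent}. The delicate point is that $Q \succeq \zeros$ is only positive semidefinite, so $\costps$ need not dominate $\norm{e_t}$ by itself; here the quadratic lower bound of Lemma \ref{lem:candidate_lyapunov} is exactly what certifies that $V_m$ is positive definite in the error and supplies the class-$\mathcal{K}_\infty$ decrease rate needed both to close the induction and to read \eqref{e:plan_descent} as a genuine ISS-Lyapunov dissipation. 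The standard discrete-time ISS-Lyapunov theorem then gives ISS of \eqref{e:system} with respect to $d_t \in \mathds{W}^{\prime}$.

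For the convergence claim I would invoke Lemma \ref{lem:convergence_error}, which gives $\norm{d_t} = \norm{g(\st_t)\tilde{\control}_t} \to 0$: an ISS system fed an input converging to zero has a state converging to zero (the converging-input converging-state property), so $e_t \to \zeros$. Finally, since the reference satisfies $\st_t^r = \zeros$ for all $t \geq N$ by \eqref{e:reference_signal_state}, we have $\st_t^r \to \zeros$, whence $\st_t = e_t + \st_t^r \to \zeros$, which completes the argument.
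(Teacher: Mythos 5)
Your proposal is correct and follows essentially the same route as the paper's proof: the identical decomposition $V_m(\st_{t+1}) - V_m(\st_t) = \left[ V_m(\st_{t+1\mid t}) - V_m(\st_t)\right] + \left[ V_m(\st_{t+1}) - V_m(\st_{t+1\mid t})\right]$, bounded via Lemma \ref{lem:bound_on_predicted_next} and Lemma \ref{lem:lipschitz_value} with $\st_{t+1}-\st_{t+1\mid t} = g(\st_t)\tilde{\control}_t$, then Remark \ref{rem:apparent_disturbance} for ISS and Lemma \ref{lem:convergence_error} for convergence of the state. The extra points you raise (robust invariance of the tube so the two lemmas apply along the realized trajectory, the role of Lemma \ref{lem:candidate_lyapunov} in supplying the quadratic bounds, and the converging-input converging-state step) are details the paper's proof leaves implicit, not a different argument.
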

\begin{proof} 
	We compute a bound on $\Delta_m(\st_t) = V_m(\st_{t+1})- V_m(\st_t)$ as follows:
	\begin{align}\label{e:ISS}
	\Delta_m(\st_t) & = V_m(\st_{t+1 \mid t})- V_m(\st_t) + V_m(\st_{t+1}) - V_m(\st_{t+1 \mid t}) \notag \\
	& \leq - \costps(\st_t, \control_t^m) + V_m(\st_{t+1}) - V_m(\st_{t+1 \mid t}) \text{ by Lemma \ref{lem:bound_on_predicted_next}} \notag \\
	& \leq - \costps(\st_t, \control_t^m) + c_3\norm{\st_{t+1}-\st_{t+1\mid t}} \quad \text{ by Lemma \ref{lem:lipschitz_value}} \notag \\
	& = - \costps(\st_t, \control_t) + c_3\norm{g(\st_t)\tilde{\control}_t}.
	\end{align}	
	Since $\norm{g(\st_t)\tilde{\control}_t}\leq w_{\max}^{\prime}$ is bounded for all $t$ by Remark \ref{rem:apparent_disturbance}, the dynamical system \eqref{e:system} is ISS under the control computed by the Algorithm \ref{a:algo}. Further, by Lemma \ref{lem:convergence_error}, $\norm{g(\st_t)\tilde{\control}_t} \rightarrow 0$ as $t \rightarrow \infty$, which implies $\norm{\st_t} \rightarrow 0$ as $t \rightarrow \infty$.
\end{proof}
In our approach the stabilizing controller and the associated Lyapunov-like function do not satisfy conditions $2$ and $3$ of \cite[Theorem 1]{haddad_adaptive}. Therefore, we used a different notion of stability. We can obtain similar results while satisfying state and control constraints at the expense of an extra condition on $g(\cdot)$. 
\begin{proposition}\label{th:main}
	Consider the dynamical system \eqref{e:system} and let Assumption \ref{as:stability} hold. Let $g(\zeros) = \zeros$. Then the control computed by Algorithm \ref{a:algo} guarantees that $(\st_t, K_t) \rightarrow \mathcal{M} \Let \{(x,K) \in \R^d \times \R^{m \times q} \mid x =0, K_{t+1} = K_t \} $ as $t \rightarrow \infty$. 
\end{proposition}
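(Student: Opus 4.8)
The plan is to verify the two conditions defining $\calM$ in turn: the state condition $\st_t\to\zeros$ and the at-rest condition for the adaptation, $\norm{K_{t+1}-K_t}_F\to 0$. The first is inherited directly from Theorem \ref{th:stability}, which already gives $\norm{\st_t}\to 0$; the new hypothesis $g(\zeros)=\zeros$ is what ties this to $\calM$, because at $\st_t=\zeros$ the weight update \eqref{e:weight_update} falls into its \emph{otherwise} branch and returns $K_{t+1}=K_t$, so every point $(\zeros,K)$ lies in $\calM$ and $\calM$ is the rest set of the joint state--weight dynamics. It therefore remains to show that the adaptation increments vanish.

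To that end I would first put the increment in closed form. By \eqref{e:actual_agent} we have $\st_{t+1}-\bar{f}(\st_t,\control_t^m)=g(\st_t)\tilde{\control}_t$, so, writing $P_t\Let g(\st_t)^{\dagger}g(\st_t)$ for the orthogonal projection onto the row space of $g(\st_t)$, the update \eqref{e:weight_update} becomes
\[ K_{t+1}-K_t=-\frac{\Gamma\,P_t\tilde{\control}_t\,\phi(\st_t)\transp}{\varepsilon+\norm{\phi(\st_t)}^2}, \]
which reduces to \eqref{e:weight_update_ana} whenever $g(\st_t)$ has full column rank ($P_t=I$). Taking Frobenius norms and using $\norm{\phi(\st_t)}/(\varepsilon+\norm{\phi(\st_t)}^2)\le 1/(2\sqrt{\varepsilon})$ gives
\[ \norm{K_{t+1}-K_t}_F\le\frac{\norm{\Gamma}}{2\sqrt{\varepsilon}}\,\norm{P_t\tilde{\control}_t}, \]
so the whole problem collapses to proving $\norm{P_t\tilde{\control}_t}\to 0$.

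For this I would revisit the Lyapunov function behind Lemma \ref{lem:convergence_error}, namely $V_K(t)\Let\trace(\tilde{K}_t\transp\Gamma\inverse\tilde{K}_t)$. Since $P_t=P_t\transp=P_t^2$ with $\norm{P_t}\le 1$, expanding $V_K(t+1)$ along the displayed update and using $\lambda_{\max}(\Gamma)<2$ yields a decrease of the form
\[ V_K(t+1)-V_K(t)\le-\bigl(2-\lambda_{\max}(\Gamma)\bigr)\frac{\norm{P_t\tilde{\control}_t}^2}{\varepsilon+\norm{\phi(\st_t)}^2}. \]
As $V_K\ge 0$ is nonincreasing and $\varepsilon+\norm{\phi(\st_t)}^2\le\varepsilon+\delta_{\phi}^2$ by the standing bound $\norm{\phi(\st_t)}\le\delta_{\phi}$, telescoping gives $\sum_{t}\norm{P_t\tilde{\control}_t}^2<\infty$ and hence $\norm{P_t\tilde{\control}_t}\to 0$. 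Substituting into the bound above gives $\norm{K_{t+1}-K_t}_F\to 0$ (the case $g(\st_t)=\zeros$ being trivial), and together with $\st_t\to\zeros$ this establishes $(\st_t,K_t)\to\calM$.

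The step I expect to be delicate is exactly the control of $P_t\tilde{\control}_t=g(\st_t)^{\dagger}g(\st_t)\tilde{\control}_t$. The tempting shortcut --- combine boundedness of $\tilde{K}_t$ (Lemma \ref{lem:bound_adaptive_control}) with $g(\st_t)\to g(\zeros)=\zeros$, or combine $\norm{g(\st_t)\tilde{\control}_t}\to 0$ (Lemma \ref{lem:convergence_error}) with the factorization $P_t\tilde{\control}_t=g(\st_t)^{\dagger}\!\bigl(g(\st_t)\tilde{\control}_t\bigr)$ --- fails, because $\norm{g(\st_t)^{\dagger}}$ can diverge at precisely the rate at which $g(\st_t)\to\zeros$, so neither route forces the increment to zero. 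The point that rescues the argument is that the increment is driven not by $g(\st_t)^{\dagger}$ in isolation but by the projection $P_t$, whose norm is at most one, and that it is $\norm{P_t\tilde{\control}_t}^2$ (and not $\norm{g(\st_t)\tilde{\control}_t}^2$) that is summable via $V_K$; summability, rather than any pointwise pseudo-inverse estimate, is what closes the proof.
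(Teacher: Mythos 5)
Your proof is correct, and it takes a genuinely different route from the paper's. The paper never splits the claim into a state part and a weight part; instead it forms a single composite Lyapunov function $V(\st_t,K_t)=V_m(\st_t)+c_3\sum_{i=t}^{t+N-1}\norm{g(\st_i^r)}\norm{\tilde{\control}_i}+a\,V_a(K_t)$ with coupling weight $a\geq (c_3L_g)^2(\varepsilon+\delta_{\phi}^2)/\bigl(4\lambda_{\min}(Q)(2-\lambda_{\max}(\Gamma))\bigr)$, proves $\Delta V\leq 0$ by combining the bound \eqref{e:ISS} on $\Delta_m$, Lipschitz continuity of $g$, the Peter--Paul inequality, and the decrease \eqref{e:adaptive_lyapunov_difference} of $V_a$ --- the hypothesis $g(\zeros)=\zeros$ enters there only to telescope the middle (tail) term, since $\st_{t+N}^r=\zeros$ forces $\norm{g(\st_{t+N}^r)}=0$ --- and then concludes by citing the partial-stability theorem \cite[Theorem 1]{haddad_adaptive}. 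You instead inherit $\st_t\to\zeros$ from Theorem \ref{th:stability}, use $g(\zeros)=\zeros$ only to see that $\{\zeros\}\times\R^{m\times q}$ sits inside $\calM$, and obtain the at-rest condition by a summability argument on $V_a$ alone. Your route is more modular, dispenses with the coupling constant and the tail-sum construction, and is in one respect sharper than the paper: the paper's working form \eqref{e:weight_update_ana} of the update (and hence its $\Delta_a$ bound) tacitly assumes $g(\st_t)^{\dagger}g(\st_t)=I$, whereas your projected error $P_t\tilde{\control}_t$, with decrease proportional to $-\norm{P_t\tilde{\control}_t}^2/(\varepsilon+\norm{\phi(\st_t)}^2)$, is valid without any rank assumption on $g(\st_t)$ and is exactly the quantity that drives $\norm{K_{t+1}-K_t}_F$; your diagnosis that neither $\norm{g(\st_t)\tilde{\control}_t}\to 0$ nor any pointwise pseudo-inverse estimate can substitute for this is also right. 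What the paper's construction buys in exchange is a single non-increasing certificate for the joint $(\st_t,K_t)$ dynamics, which is the hypothesis format of \cite[Theorem 1]{haddad_adaptive} and hence yields Lyapunov-type stability of $\calM$ in addition to attractivity; it also keeps the proposition's proof independent of Lemma \ref{lem:convergence_error} and the Key Technical Lemma, on which your argument relies indirectly through Theorem \ref{th:stability}.
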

\par A proof of Proposition \ref{th:main} is given in the appendix.

\begin{remark}
Our main result on stability in Theorem \ref{th:stability} is based on Lipschitz continuity of the value function of MPC. Since in the proposed framework MPC considers both the adaptive control component and matched uncertainties as disturbances, we utilized the Key Technical Lemma \cite[Lemma 6.2.1]{goodwin_adaptive_filtering} of adaptive control to show that the disturbance is vanishing with time in Lemma \ref{lem:convergence_error}. This way we are able to achieve not only ISS, but also convergence of states to the equilibrium while satisfying state and control constraints.
\end{remark}
\begin{remark}
Stability result of \cite[Theorem 1]{haddad_adaptive} is based on partial stability via a Lyapunov-like function without using the Key Technical Lemma. In contrast to \cite[Theorem 1]{haddad_adaptive}, we have state and control constraints, and we do not assume the existence of a stabilizing controller and associated Lyapunov-like function. Therefore, we cannot guarantee the satisfaction of conditions $2$ and $3$ of \cite[Theorem 1]{haddad_adaptive} a priori. However, we are able to prove similar stability result by adding an extra term in the candidate Lyapunov function on the expense of an extra condition on $g(\cdot)$. 
\end{remark}

\section{Numerical experiment}\label{s:experiment}
In this section we present two examples to corroborate our theoretical results. 
In both examples, we use MATLAB based software packages CasAdi \cite{casadi} and mpctools \cite{mpctools} for simulations.  
\begin{example}
We consider the model and parameters of stirred tank reactor which was considered as a benchmark model in \cite{Mayne_tube_NLMPC}. The example considers a nonlinear chemical reaction described by product concentration ($y(t)$) and temperature ($z(t))$ with a nonlinear periodic disturbance as follows:
\begin{equation}\label{e:cont-dynamics}
\begin{aligned}
\dot{y}_t & = \theta_1(1-y_t) - \theta_2 y_t e^{\frac{\theta_3}{z_t}} \\
\dot{z}_t & = \theta_1(\theta_4-z_t) + \theta_2 y_t e^{\frac{\theta_3}{z_t}}  -\theta_5(z_t - \theta_6) (\control_t + w_t)
\end{aligned}
\end{equation}
where $\st_t = \bmat{y_t & z_t}\transp$, $\theta_1=0.05, \theta_2=300, \theta_3=-5, \theta_4=0.3947, \theta_5=0.117, \theta_6=0.3816$. In the above model $w_t  = W\sin t$ is a time varying disturbance with $W=2$ is weight, which is unknown to the controller and $\sin t$ is considered as a known basis function. The continuous time dynamic model \eqref{e:cont-dynamics} is discretized by using the sampling interval $0.5$ seconds within the MATLAB script and Algorithm \ref{a:algo} is employed to steer the initial state $\st_0 = \bmat{0.9831 & 0.3918}\transp$ to a locally unstable equilibrium state $\st_e = \bmat{0.2632 & 0.6519}\transp$. The equilibrium control is computed to be $\control_e=0.7583$ as in \cite{Mayne_tube_NLMPC}. The cost functional and the reference trajectories are appropriately modified for the non-zero equilibrium. The state and control sets are 
$
\mathcal{X}  = [0,\ 2]\times [0,\ 2], \text{ and }  \controlset = [0,\ 2]$.
The constraint sets for the offline reference governor are 
$\mathcal{X}_r=\mathcal{X}$, $\controlset_r=[0.02,2]\subset\controlset$ and the terminal state $\st_N^r$ is chosen to be $x_e$. The simulation parameters are chosen to be $N=40, \Gamma = 1.5, \varepsilon = 0.1, Q = 0.5I, R=0.5, Q_f = 10^5I$.
We compare the performance of the tube-based MPC controller \cite{Mayne_tube_NLMPC} and our proposed controller, using the same set of parameters under the same scenario. 
Our numerical results are illustrated in Fig.\ \ref{fig:comparison}, which demonstrate that the tube-based MPC controller produces small oscillations around the equilibrium point. Note that when the same tube-based MPC controller is equipped with the adaptive mechanism from our Algorithm \ref{a:algo}, the closed-loop states asymptotically converge to the equilibrium without violating state and control constraints.  
\begin{figure}[t]
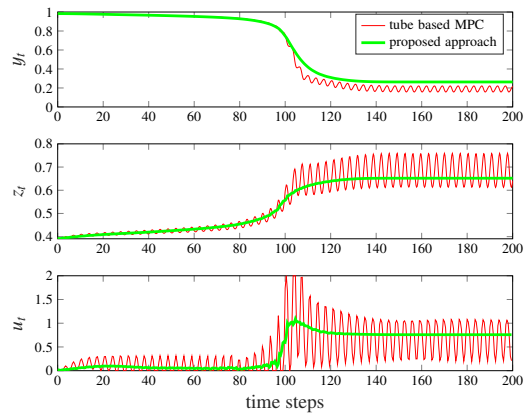

	\centering
	\begin{adjustbox}{width = 0.8\columnwidth}
%
	\end{adjustbox}
\caption[Comparison between tube-based MPC and our proposed approach]{The proposed approach ensures asymptotic convergence of the closed-loop states to the equilibrium point.}
	\label{fig:comparison}
\end{figure}
\end{example}
\begin{example}
In this second example we seek to demonstrate the benefit of local adaptation. For this purpose, we consider two aircrafts subject to wing-rock dynamics, which are adverse lateral-longitudinal dynamics experienced at high angle of attack. Wing-rock is a benchmark dynamical system that has been widely used to evaluate adaptive controllers. The key difference here are two folds, first we simulate the scenario where a common controller is synthesized assuming nominal dynamics, but each aircraft must adapt to its unknown individual operating conditions; second, unlike existing adaptive control results \cite{yucelen2012command,chowdhary2010concurrent,chowdhary2013concurrent} we require that the control and system states remain bounded in a pre-specified constraint set. 
This problem can be posed as that of a centralized controller synthesized with the commonly known nominal dynamics and a local adaptation capability as in Fig. \ref{Fig:Setup}. For both agents, the system matrix pair $(A,B)$ and the feature basis function $\phi(\cdot)$ are the same but the initial conditions and the disturbance weights are different. Letting $\delta_t$ denote the roll angle in radian, and $p_t$ denote the roll rate in radian per second, the state of the wing-rock dynamics model is $x_t \Let \bmat{\delta_t & p_t}\transp$ at time $t$. We discretized the nominal dynamics used in \cite{yucelen2012command} with a sampling interval of $0.05$ seconds and chose the disturbance terms as follows: 
\begin{equation}
x_{t+1}^{(i)} = Ax_t^{(i)} + B\Big(u_t^{(i)} + W^{(i)}\phi(x_t^{(i)})\Big),
\label{eq:disc-sys}
\end{equation}
for $i=1,2$, where $A=\begin{bmatrix}
1 & 0.05 \\ 0 & 1
\end{bmatrix}$, $B=\begin{bmatrix}
0 \\ 0.05
\end{bmatrix}$, $W^{(1)} = 5  \bar{W}$,
$W^{(2)} = 6  \bar{W}$, and \[ \bar{W} = \bmat{0.1414 & 0.5504 & -0.0624 & 0.0095 & 0.0215}.\]
The feature basis function $\phi(\cdot)$ is saturated by a standard saturation function to meet the requirement of bounded disturbances as $\phi(x) = \sat(\beta(x))$, where $\beta(x) = \begin{bmatrix}
x_1 & x_2 & |x_1|x_2 & |x_2|x_1 & x_1^3
\end{bmatrix}\transp $ and $\sat(\cdot)$ is a standard saturation function with the threshold $0.1\max_{x \in \mathcal{X}} \norm{\beta(x)}_{\infty}$. 
\par The goal is to stabilize the wing-rock dynamics by driving states to the origin from arbitrarily chosen initial states $x_0^{(1)}=\bmat{-10 & 6}\transp$ and $x_0^{(2)}=\bmat{10 & -10}\transp$, respectively, where roll angle is represented in degree and roll rate in degree per second. For both agents, sets within which the state and control are to be constrained are given as 
\[ \mathcal{X}  = [-30,\ 30]\times [-15,\ 15], \text{ and }
\controlset  = [-60,\ 60]. \]
\par We use the MATLAB based software package MPT 3.0 \cite{MPT3} to compute the polytopes needed for the constraint tightening. The simulation parameters are chosen to be same for both agents with $N=100$ for \eqref{e:reference_governor} and $N=20$ otherwise. We set  $\Gamma = 1.5, \varepsilon = 1, Q = I, R=0.1$. Fig.\ ~\ref{fig:roll_angle} and Fig.\ ~ \ref{fig:roll_rate} depict the trajectories of the states of the system obtained by the Algorithm \ref{a:algo}, which are compared with the reference trajectory obtained in Section \ref{s:reference_governor}. Figure~\ref{fig:roll_angle} demonstrates that the roll angle of the first agent converges to the reference trajectory when controller is designed by our approach while the MPC based controller, which does not have the adaptive control component, although follows the reference trajectory but has oscillations of comparatively larger amplitude. Since the second agent has larger disturbance weights, oscillations are also bigger than those of the first agent. However, our approach results in comparatively faster convergence for both agents. We have similar observations in Fig.\ \ref{fig:roll_rate} for roll rate. 
\begin{figure}
	\centering
	\begin{adjustbox}{width = 0.8\columnwidth}
		\input{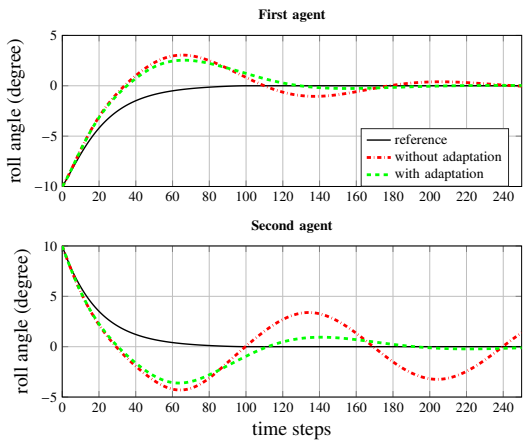}		
	\end{adjustbox}
\caption{Roll angle of each agent converge to the reference and our approach shows faster convergence for both agents.}
\label{fig:roll_angle}
\end{figure}

\begin{figure}
	\centering
	\begin{adjustbox}{width = 0.8\columnwidth}
		\input{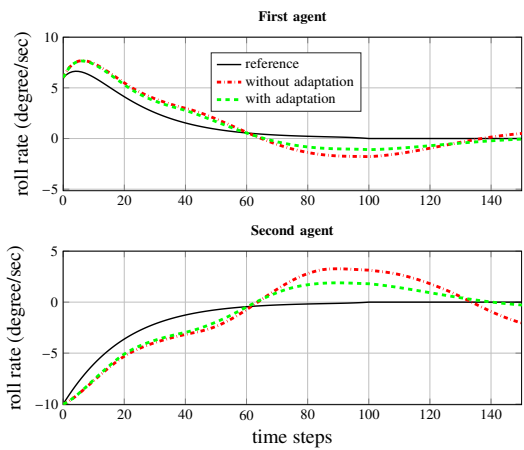}		
	\end{adjustbox}
	\caption{Roll rate of each agent converge to the reference and our approach shows faster convergence for both agents.}
	\label{fig:roll_rate}
\end{figure}
\end{example}
\section{Epilogue}\label{s:epilogue}
We presented an algorithm that combines MPC and adaptive control mechanism, and ensures stability of the closed-loop states. We compared our approach with tube-based MPC. The proposed framework is limited to bounded, matched and structured uncertainties. Some suitable combinations of adaptive control techniques and MPC with required modifications can be employed within the framework of the present approach to relax our assumptions on uncertainties and to achieve a different control objective. The proposed approach may be extended to the settings of unreliable channels \cite{PDQ_intermittent} and/or interacting agents.  

\bibliographystyle{IEEEtran}        
\bibliography{ref} 
         
\appendix

\begin{proof}[Lemma \ref{lem:convergence_error}]
Since $0 \leq \norm{g(\st_t)\tilde{\control}_t} \leq \delta_g \norm{\tilde{\control}_t}$, it is sufficient to prove $\norm{\tilde{\control}_t} \rightarrow 0$ as $t \rightarrow \infty$. 	
Let us define $V_a(K_t) \Let \trace(\tilde{K}_t\transp \Gamma^{-1} \tilde{K}_t)$, then from \cite[Lemma 6]{trace_bound}, we get that
\begin{equation}\label{e:adaptive_lyapunov}
 \lambda_{\min}(\Gamma^{-1})\trace(\tilde{K}_t\tilde{K}_t\transp)\leq V_a(K_t) \leq \lambda_{\max}(\Gamma^{-1})\trace(\tilde{K}_t\tilde{K}_t\transp).
\end{equation}
  Let us compute the difference 
 \begin{align}\label{e:adaptive_lyapunov_difference}
	\Delta_a(K_t) &\Let V_a(K_{t+1}) - V_a(K_t) = \trace(\tilde{K}_{t+1}\transp \Gamma^{-1} \tilde{K}_{t+1}) - \trace(\tilde{K}_t\transp \Gamma^{-1} \tilde{K}_t) \notag \\
	& = \frac{\trace \left( (\Gamma \tilde{\control}_t \phi(\st_t)\transp)\transp \tilde{\control}_t \phi(\st_t)\transp \right)}{ (\varepsilon + \norm{\phi(\st_t)}^2)^2} - \frac{2 \trace \left( \tilde{K}_t\transp \tilde{\control}_t \phi(\st_t)\transp \right) }{\varepsilon + \norm{\phi(\st_t)}^2}\notag \\
	& = \frac{\trace(\norm{\phi(\st_t)}^2 \tilde{\control}_t \transp \Gamma \tilde{\control}_t )}{(\varepsilon + \norm{\phi(\st_t)}^2)^2} - \frac{2 \trace(\tilde{\control}_t \tilde{\control}_t\transp)}{\varepsilon + \norm{\phi(\st_t)}^2}\notag \\
	& \leq - \frac{ \tilde{\control}_t \transp (2I - \Gamma) \tilde{\control}_t }{\varepsilon + \norm{\phi(\st_t)}^2}.
\end{align}
Since $\lambda_{\max}(\Gamma) < 2$, $2I - \Gamma \succ 0$. It is clear from \eqref{e:adaptive_lyapunov_difference} that 
\begin{align}\label{e:bound_adaptive_lyapunov}
& V_a(K_T) - V_a(K_0) \leq - \sum_{t=0}^{T-1}\frac{ \tilde{\control}_t \transp (2I - \Gamma) \tilde{\control}_t }{\varepsilon + \norm{\phi(\st_t)}^2} \notag \\
& V_a(K_T)  \leq V_a(K_0) \text{ for all } T, 
\end{align}
which in turn implies that
\begin{align*}
\lim_{T \rightarrow \infty }\sum_{t=0}^{T-1}\frac{ \tilde{\control}_t \transp (2I - \Gamma) \tilde{\control}_t }{\varepsilon + \norm{\phi(\st_t)}^2}   &\leq \lim_{T \rightarrow \infty } \left( V_a(K_0) - V_a(K_T) \right)  \notag \\
\sum_{t=0}^{\infty}\frac{ \tilde{\control}_t \transp (2I - \Gamma) \tilde{\control}_t }{\varepsilon + \norm{\phi(\st_t)}^2}   &\leq V_a(K_0). 
\end{align*}
Therefore, $\frac{ s_t }{(\varepsilon + \norm{\phi(\st_t)}^2)^{1/2}} \in \ell_2$, where $s_t = \left( \tilde{\control}_t \transp (2I - \Gamma) \tilde{\control}_t \right)^{\frac{1}{2}}$, which implies $\frac{ s_t }{(\varepsilon + \norm{\phi(\st_t)}^2)^{1/2}} \rightarrow 0 $. Since $\varepsilon$ is a constant and $\norm{\phi(\cdot)}$ is uniformly bounded, the Key Technical Lemma \cite[Lemma 6.2.1]{goodwin_adaptive_filtering} trivially implies $s_t \rightarrow 0$. Since $0 \leq (\lambda_{\min}(2I-\Gamma))^{1/2}\norm{\tilde{\control}_t} \leq s_t$, \cite[Theorem 2.7]{lang_analysis} ensures $\norm{\tilde{\control}_t} \rightarrow 0$. 
\end{proof}
\begin{proof}[Lemma \ref{lem:bound_adaptive_control}]
It is clear from \eqref{e:bound_adaptive_lyapunov} and \eqref{e:adaptive_lyapunov} that $\lambda_{\min}(\Gamma^{-1})\norm{\tilde{K}_t}_F^2 \leq \lambda_{\max}(\Gamma^{-1})\norm{\tilde{K}_0}_F^2$. Therefore, if $K_0 = \zeros$, then $\tilde{K}_0 = W$, which in turn implies
\begin{equation}
\norm{\tilde{K}_t}_F^2 \leq \frac{\lambda_{\max}(\Gamma)}{\lambda_{\min}(\Gamma)}\norm{W}_F^2.
\end{equation}
Therefore, $\norm{\control_t^a} \leq \norm{\tilde{\control}_t} + \norm{W\phi(\st_t)} 
\leq \norm{\tilde{K}_t}_F\delta_{\phi} + \norm{W}_F\delta_{\phi} \leq \authority^a$.
\end{proof}
\begin{proof}[Lemma \ref{lem:candidate_lyapunov}]
The existence of $c_1 > 0$ can be proved similarly as in \cite[Proposition 2]{Mayne_tube_NLMPC}. In order to prove the existence of $c_2>0$, we utilize Lipscitz continuity of $f$ and $g$. Since $V_m(\st_t) \leq \min_{(\control_{t+i\mid t})_0^{N-1}} V(\st_t, (\control_{t+i\mid t})_0^{N-1})$, we can show that $V_m(\st_t) \leq  V(\st_t, (\control_{t+i}^r)_0^{N-1}) = \st_{t+N\mid t}\transp Q_f \st_{t+N\mid t} + \sum_{i=0}^{N-1} (\st_{t+i \mid t}-\st_{t+i}^r)\transp Q (\st_{t+i \mid t}-\st_{t+i}^r) \leq \lambda_{\max}(Q_f)\norm{\st_{t+N\mid t}}^2 + \sum_{i=0}^{N-1} \lambda_{\max}(Q) \norm{\st_{t+i\mid t} - \st_{t+i}^r}^2 $. Further,
\begin{align*}
\st_{t+i \mid t} - \st_{t+i}^r &= f(\st_{t+i -1 \mid t}) -f(\st_{t+i -1}^r) \\
& \quad + \left( g(\st_{t+i -1 \mid t})-g(\st_{t+i -1}^r)\right) \control_{t+i-1}^r \\
\norm{\st_{t+i \mid t} - \st_{t+i}^r} & \leq \left( L_f+L_g\norm{\control_{t+i-1}^r} \right) \norm{\st_{t+i-1 \mid t} - \st_{t+i-1}^r}  \\
&\leq \bar{L}\norm{\st_{t+i-1 \mid t} - \st_{t+i-1}^r}  \leq \bar{L}^i\norm{\st_{t} - \st_{t}^r},
\end{align*}
where $\bar{L} = L_f+L_g\authority^r$. Since $\st_{t+N}^r = \zeros$ for all $t$, there exists $c_2 = \bar{L}^N\lambda_{\max}(Q_f) + \sum_{i=0}^{N-1}\bar{L}^i\lambda_{\max}(Q)$.
\end{proof}

\begin{proof}[Proposition \ref{th:main}]
Since $\st_N^r = \zeros$ due to \eqref{e:reference_governor}, $\norm{g(\st_{t+N}^r)} = 0$ for all $t$. We begin with the following candidate Lyapunov function to see the stability of our algorithm
\begin{equation}
V(\st_t, K_t) = V_m(\st_t) + c_3 \left(\sum_{i=t}^{t+N-1}\norm{g(\st_i^r)}\norm{\tilde{\control}_i} \right) + a V_a(K_t),
\end{equation}
where $a \geq \frac{(c_3L_g)^2(\varepsilon + \delta_{\phi}^2)}{4\lambda_{\min}(Q)(2-\lambda_{\max}(\Gamma))}$.
We compute the Lyapunov difference $\Delta(\st_t, K_t) \Let V(\st_{t+1}, K_{t+1}) - V(\st_t, K_t) $ as follows
\begin{equation*}
\Delta(\st_t, K_t) = \Delta_m(\st_t) - c_3\norm{g(\st_t^r)}\norm{\tilde{\control}_t} + a \Delta_a(K_t),
\end{equation*}
where $\Delta_m(\st_t)$ is defined in \eqref{e:ISS} and $\Delta_a(K_t)$ in \eqref{e:adaptive_lyapunov_difference}.
Let us first consider $\Delta_m(\st_t) - c_3\norm{g(\st_t^r)}\norm{\tilde{\control}_t}$,
\begin{align*}
& \Delta_m(\st_t) - c_3\norm{g(\st_t^r)}\norm{\tilde{\control}_t} \leq - \costps(\st_t, \control_t^m) + c_3\norm{g(\st_t)\tilde{\control}_t} \notag \\
& \quad - c_3\norm{g(\st_t^r)}\norm{\tilde{\control}_t}, \notag \\
& \quad \leq - \costps(\st_t, \control_t^m) + c_3\norm{g(\st_t)}\norm{\tilde{\control}_t}- c_3\norm{g(\st_t^r)}\norm{\tilde{\control}_t}, \notag \\
& \quad \leq - \costps(\st_t, \control_t^m) + c_3L_g\norm{\st_t-\st_t^r} \norm{\tilde{\control}_t}, \notag \\
& \quad \leq - \costps(\st_t, \control_t^m) + \lambda_{\min}(Q)\norm{\st_t - \st_t^r}^2 +  \frac{(c_3L_g)^2}{4\lambda_{\min}(Q)}\norm{\tilde{\control}_t}^2,
\end{align*}
where the last inequality is due to the Peter-Paul inequality. 
Therefore,
\begin{align}\label{e:Lyapunov_difference}
\Delta(\st_t, K_t) & \leq - \norm{\st_t - \st_t^r}_{(Q-\lambda_{\min}(Q)I)}^2 - \norm{\control_t^m - \control_t^r}_R^2   \notag \\
& \quad -  \frac{ \tilde{\control}_t\transp \left( a(2I - \Gamma) - \frac{(c_3L_g)^2}{4\lambda_{\min}(Q)}(\varepsilon + \norm{\phi(\st_t)}^2)I\right)\tilde{\control}_t }{\varepsilon + \norm{\phi(\st_t)}^2}
\end{align}
Due to the choice of $a$, we have $a(2I - \Gamma) - \frac{(c_3L_g)^2}{4\lambda_{\min}(Q)}(\varepsilon + \norm{\phi(\st_t)}^2)I\succ 0$, which in turn implies $\Delta(\st_t, K_t)\leq 0$ for all $t$. The rest of the result follows from \cite[Theorem 1]{haddad_adaptive}.
\end{proof}
\end{document}